
\documentclass[12pt,reqno]{article}
\usepackage[usenames]{color}
\usepackage[colorlinks=true,linkcolor=webgreen,filecolor=webbrown,citecolor=webgreen]{hyperref}
\usepackage{amssymb,amsmath,amsfonts}
\usepackage{enumerate}

\setcounter{MaxMatrixCols}{10}

\definecolor{webgreen}{rgb}{0,.5,0}
\definecolor{webbrown}{rgb}{.6,0,0}
\newtheorem{theorem}{Theorem}

\newtheorem{corollary}[theorem]{Corollary}

\newtheorem{example}[theorem]{Example}

\newtheorem{remark}[theorem]{Remark}

\newenvironment{proof}[1][Proof]{\noindent\textbf{#1.} }{\ \rule{0.5em}{0.5em}}

\setlength{\textwidth}{6.5in}
\setlength{\oddsidemargin}{.1in}
\setlength{\evensidemargin}{.1in}
\setlength{\topmargin}{-.5in}
\setlength{\textheight}{8.9in}
\allowdisplaybreaks
\input{tcilatex}
\begin{document}

\begin{center}
\vskip1cm

{\LARGE \textbf{An identity for derivatives}}

\vspace{2cm}

{\large Ulrich Abel}\\[3mm]
\textit{Fachbereich MND}\\[0pt]
\textit{Technische Hochschule Mittelhessen}\\[0pt]
\textit{Wilhelm-Leuschner-Stra\ss e 13, 61169 Friedberg, }\\[0pt]
\textit{Germany}\\[0pt]
\href{mailto:Ulrich.Abel@mnd.thm.de}{\texttt{Ulrich.Abel@mnd.thm.de}}
\end{center}

\vspace{2cm}

{\large \textbf{Abstract.}}

\bigskip

We present a generalization of a formula of higher order derivatives and
give a short proof.

\bigskip

\textit{Mathematics Subject Classification (2010)}: 26A06, 
26A24. 

\emph{Keywords: }One-variable calculus, differentiation, higher derivatives.

\vspace{2cm}

\section{Introduction}

Recently, Baran e.a. \cite{Baran-ea-2015} published the interesting identity 
\begin{equation}
\frac{1}{n!}\sum_{k=0}^{n}\left( -1\right) ^{k}\binom{n}{k}g^{k}\left(
fg^{n-k}\right) ^{\left( n\right) }=f\left( g^{\prime }\right) ^{n}
\label{baran-identity}
\end{equation}%
$\left( n=0,1,2,\ldots \right) $, for sufficiently often differentiable
functions. This identity is valid in each complex commutative algebra $%
\mathcal{A}$ with unity and each derivation operator $D:\mathcal{%
A\rightarrow A}$ (a linear operator with the property $D\left( fg\right)
=gD\left( f\right) +fD\left( g\right) $) with the notation $D\left( f\right)
=f^{\prime }$ and $D^{\left( k\right) }\left( f\right) =f^{\left( k\right) }$%
. A proof of identity $\left( \ref{baran-identity}\right) $ is annonounced
to be given in the forthcoming paper \cite{Baran-ea-in preparation}. A first
version with $f=1$ was found by B. Mil\'{o}wka \cite{Milowka-2005}\cite%
{Milowka-2006} in 2005, for polynomials $g$. In 2012, P. Ozorka proved it
for arbitrary functions $g$.

Such identities play a key role in deriving Markov type inequalities. M.
Baran observed that the Mil\'{o}wka identity implies a lower estimate for
the $k$-th derivative of polynomials considered on planar A. Markov sets.

The purpose of this note is a short transparent proof of a generalization of
identity $\left( \ref{baran-identity}\right) $. We restrict ourselves to
ordinary differentiation of functions on real or complex domains. One
special instance is the symmetric form 
\begin{equation}
\sum_{k=0}^{n}\left( -1\right) ^{k}\binom{n}{k}\left( f_{1}g^{k}\right)
^{\left( p\right) }\left( f_{2}g^{n-k}\right) ^{\left( n-p\right) }=\left(
-1\right) ^{p}n!f_{1}f_{2}\left( g^{\prime }\right) ^{n},
\label{identity-2-functions}
\end{equation}%
for $p=0,\ldots ,n$. Note that Eq. $\left( \ref{baran-identity}\right) $ is
the case $p=0$, $f_{1}=1$, $f_{2}=f$. Identity $\left( \ref%
{identity-2-functions}\right) $ can be extended to several functions (see
Theorem~\ref{theorem-1}).

We mention the similarity of Eq. $\left( \ref{identity-2-functions}\right) $
to a different identity for derivatives 
\begin{equation}
x\sum_{k=0}^{n}\binom{n}{k}\left( x^{k}f\left( x\right) \right) ^{\left(
k\right) }\left( x^{n-k}g\left( x\right) \right) ^{\left( n-k\right)
}=\left( x^{n+1}f\left( x\right) g\left( x\right) \right) ^{\left( n\right)
},  \label{id-linear-2-monomials}
\end{equation}%
which is a consequence of a generalization of the Leibniz Rule \cite%
{Abel-amm-2013} giving a closed form of the sum 
\begin{equation*}
\sum_{k=0}^{n}\binom{n}{k}\left( h^{k}f\right) ^{\left( k\right) }\left(
h^{n-k}g\right) ^{\left( n-k\right) }.
\end{equation*}

\section{The identity}

Our notation uses multi-indices. For $\mathbf{k}=\left( k_{1},\ldots
,k_{r}\right) \in \mathbb{Z}^{r}$, denote $\left\vert \mathbf{k}\right\vert
=k_{1}+\cdots +k_{r}$. For positive integers $n$, the binomial coefficient
is defined by $\binom{n}{\mathbf{k}}=\binom{n}{k_{1},\ldots ,k_{r}}:=\frac{n!%
}{k_{1}!\cdots k_{r}!\left( n-\left\vert k\right\vert \right) !}$.
Throughout the paper $\sum_{\left\vert \mathbf{k}\right\vert =n}$ means that
the summation runs over all $\mathbf{k}\in \left( \mathbb{Z}_{\geq 0}\right)
^{r}$ satisfying $\left\vert \mathbf{k}\right\vert =n$.

The following theorem contains the most general formula.

\begin{theorem}
\label{theorem-1}Let $n\in \mathbb{N}_{0}$, $r\in \mathbb{N}$, and let $%
f_{i} $, $g_{i}$ $\left( i=1,\ldots ,r\right) $ be functions which have a
derivative of order $n$. Suppose that $\sum_{i=1}^{r}g_{i}=0$. Then, for $%
\mathbf{s}=\left( s_{1},\ldots ,s_{r}\right) \in \left( \mathbb{Z}_{\geq
0}\right) ^{r}$, 
\begin{equation*}
\sum_{\left\vert \mathbf{k}\right\vert =n}\binom{n}{\mathbf{k}}%
\prod_{i=1}^{r}\left( f_{i}g_{i}^{k_{i}}\right) ^{\left( s_{i}\right)
}=\left\{ 
\begin{tabular}{lll}
$0$ &  & $\left( \left\vert s\right\vert <n\right) ,$ \\ 
&  &  \\ 
$n!\left( \prod_{i=1}^{r}f_{i}\right) \prod_{i=1}^{r}\left( g_{i}^{\prime
}\right) ^{s_{i}}$ &  & $\left( \left\vert s\right\vert =n\right) .$%
\end{tabular}%
\right.
\end{equation*}
\end{theorem}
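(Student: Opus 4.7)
My plan is to turn the identity into a multivariable one via the standard trick of introducing $r$ independent dummy variables $x_{1},\ldots,x_{r}$ and exploiting the hypothesis $\sum_{i}g_{i}=0$ as the statement that the function
$$G(\mathbf{x}):=\sum_{i=1}^{r}g_{i}(x_{i})$$
vanishes on the diagonal $x_{1}=\cdots =x_{r}$. Accordingly I would set
$$P(\mathbf{x}):=F(\mathbf{x})\,G(\mathbf{x})^{n},\qquad F(\mathbf{x}):=\prod_{i=1}^{r}f_{i}(x_{i}),$$
and, expanding $G^{n}$ by the multinomial theorem and then applying the mixed partial $\partial ^{\mathbf{s}}:=\partial _{x_{1}}^{s_{1}}\cdots \partial _{x_{r}}^{s_{r}}$ (each factor $f_{i}g_{i}^{k_{i}}$ depends on its own variable only, so the derivative splits cleanly), observe that evaluation on the diagonal reproduces exactly the left-hand side of the theorem.

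The next step is to compute $\partial ^{\mathbf{s}}(FG^{n})|_{\mathrm{diag}}$ the other way, via the ordinary and generalized Leibniz rules:
$$\partial ^{\mathbf{s}}(FG^{n})=\sum_{\mathbf{a}+\mathbf{b}=\mathbf{s}}\binom{\mathbf{s}}{\mathbf{a}}\,\partial ^{\mathbf{a}}F\sum_{\mathbf{c}_{1}+\cdots +\mathbf{c}_{n}=\mathbf{b}}\frac{\mathbf{b}!}{\mathbf{c}_{1}!\cdots \mathbf{c}_{n}!}\prod_{j=1}^{n}\partial ^{\mathbf{c}_{j}}G.$$
Because $G$ is a sum of functions each depending on a single coordinate, $\partial ^{\mathbf{c}_{j}}G$ is zero unless $\mathbf{c}_{j}=0$ or $\mathbf{c}_{j}=ce_{i}$ is supported at one coordinate, and the case $\mathbf{c}_{j}=0$ contributes $G$ itself, which dies on the diagonal. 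Hence every surviving term must have $|\mathbf{c}_{j}|\geq 1$ for all $j$, forcing $|\mathbf{b}|\geq n$. When $|\mathbf{s}|<n$ this is impossible, so the entire sum vanishes; this settles the first case.

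When $|\mathbf{s}|=n$ the same inequality forces $\mathbf{a}=0$, $\mathbf{b}=\mathbf{s}$, and $|\mathbf{c}_{j}|=1$ for every $j$, i.e.\ $\mathbf{c}_{j}=e_{i_{j}}$. The admissible index sequences $(i_{1},\ldots ,i_{n})$ are precisely those in which the value $i$ occurs exactly $s_{i}$ times, so I will finish by counting the $n!/(s_{1}!\cdots s_{r}!)$ such sequences and noting that each contributes the weight $\mathbf{s}!\prod_{i}(g_{i}^{\prime })^{s_{i}}$; the two factorials cancel to deliver the required value $n!\prod_{i}f_{i}\prod_{i}(g_{i}^{\prime })^{s_{i}}$. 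The only place where I expect to need care is precisely this combinatorial book-keeping at the end: the conceptual content (that $\sum g_{i}=0$ enters only through the vanishing of $G$ on the diagonal, and that everything else is pure Leibniz) is transparent, but the multi-index factorials and multinomial coefficients must be tracked without slips.
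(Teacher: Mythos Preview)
Your proposal is correct and follows essentially the same route as the paper: introduce independent variables $x_{1},\ldots,x_{r}$, recognise the left-hand side as $\partial^{\mathbf{s}}\bigl(F(\mathbf{x})G(\mathbf{x})^{n}\bigr)$ evaluated on the diagonal via the multinomial theorem, and exploit $G|_{\mathrm{diag}}=0$. The only cosmetic difference is that the paper invokes Fa\`{a} di Bruno's formula to evaluate $\partial^{\mathbf{s}}G^{n}|_{\mathrm{diag}}$ in one stroke, whereas you expand $G^{n}$ by the $n$-fold Leibniz rule and do the combinatorics of the $\mathbf{c}_{j}=e_{i_{j}}$ by hand; the two computations are equivalent.
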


\begin{proof}
The proof is based on the observation that, by Fa\`{a} di Bruno's formula, $%
\left( \frac{d}{dx}\right) ^{s}f^{n}\left( x\right) =0$, for $s=0,\ldots
,n-1 $, and $\left( \frac{d}{dx}\right) ^{n}f^{n}\left( x\right) =n!\left(
f^{\prime }\left( x\right) \right) ^{n}$ if $f\left( x\right) =0$. Let $%
f_{i} $, $g_{i}$ $\left( i=1,\ldots ,r\right) $ be functions which have a
derivative of order $n$ in a certain point $x\in \mathbb{R}$. Furthermore,
define $\mathbf{x}=\left( x_{1},\ldots ,x_{r}\right) \in \mathbb{R}^{r}$ and 
$\mathbf{1}=\left( 1,\ldots ,1\right) \in \mathbb{Z}^{r}$. We have 
\begin{eqnarray*}
&&\sum_{\left\vert \mathbf{k}\right\vert =n}\binom{n}{\mathbf{k}}%
\prod_{i=1}^{r}\left( f_{i}g_{i}^{k_{i}}\right) ^{\left( s_{i}\right)
}\left( x\right) \\
&=&\left. \left( \frac{\partial ^{\left\vert s\right\vert }}{\partial
x_{1}^{s_{1}}\cdots \partial x_{r}^{s_{r}}}\sum_{\left\vert \mathbf{k}%
\right\vert =n}\binom{n}{\mathbf{k}}\prod_{i=1}^{r}\left(
f_{i}g_{i}^{k_{i}}\right) \left( x_{i}\right) \right) \right\vert _{\mathbf{x%
}=\mathbf{1}x} \\
&=&\left. \left( \frac{\partial ^{\left\vert s\right\vert }}{\partial
x_{1}^{s_{1}}\cdots \partial x_{r}^{s_{r}}}\left[ \left(
\prod_{i=1}^{r}f_{i}\left( x_{i}\right) \right) \left(
\sum_{i=1}^{r}g_{i}\left( x_{i}\right) \right) ^{n}\right] \right)
\right\vert _{\mathbf{x}=\mathbf{1}x},
\end{eqnarray*}%
where we applied the binomial theorem. Note that $\sum_{i=1}^{r}g_{i}\left(
x\right) =0$ implies that 
\begin{equation*}
\left. \frac{\partial ^{\left\vert s\right\vert }}{\partial
x_{1}^{s_{1}}\cdots \partial x_{r}^{s_{r}}}\left( \sum_{i=1}^{r}g_{i}\left(
x_{i}\right) \right) ^{n}\right\vert _{\mathbf{x}=\mathbf{1}x}=\left\{ 
\begin{tabular}{lll}
$0$ &  & $\left( \left\vert s\right\vert <n\right) ,$ \\ 
&  &  \\ 
$n!\prod_{i=1}^{r}\left( g_{i}^{\prime }\left( x\right) \right) ^{s_{i}}$ & 
& $\left( \left\vert s\right\vert =n\right) .$%
\end{tabular}%
\right.
\end{equation*}%
Now the desired formula follows.
\end{proof}

Now we consider the particular case that the functions $g_{i}$ differ only
by constant factors, i.e., $g_{i}=c_{i}g$ with a function $g$ and $c_{i}\in 
\mathbb{R}$ $\left( i=1,\ldots ,r\right) $.

\begin{corollary}
\label{corollary2}Let $n\in \mathbb{N}_{0}$, $r\in \mathbb{N}$, and let $%
f_{i}$ $\left( i=1,\ldots ,r\right) $ and $g$ be functions which have a
derivative of order $n$. Suppose that $\mathbf{c}=\left( c_{1},\ldots
,c_{r}\right) \in \mathbb{R}^{r}$ satisfies $\left\vert \mathbf{c}%
\right\vert =0$. Let $\mathbf{s}=\left( s_{1},\ldots ,s_{r}\right) \in
\left( \mathbb{Z}_{\geq 0}\right) ^{r}$. Then 
\begin{equation*}
\sum_{\left\vert \mathbf{k}\right\vert =n}\binom{n}{\mathbf{k}}%
\prod_{i=1}^{r}c_{i}^{k_{i}}\left( f_{i}g^{k_{i}}\right) ^{\left(
s_{i}\right) }=\left\{ 
\begin{tabular}{lll}
$0$ &  & $\left( \left\vert \mathbf{s}\right\vert <n\right) ,$ \\ 
&  &  \\ 
$n!\left( \prod_{i=1}^{r}c_{i}^{s_{i}}\right) \left(
\prod_{i=1}^{r}f_{i}\right) \left( g^{\prime }\right) ^{\left\vert
s\right\vert }$ &  & $\left( \left\vert \mathbf{s}\right\vert =n\right) .$%
\end{tabular}%
\right.
\end{equation*}
\end{corollary}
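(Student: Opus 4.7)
The plan is to deduce Corollary~\ref{corollary2} directly from Theorem~\ref{theorem-1} by a specialization of the functions $g_i$. Since the corollary is stated as a ``particular case,'' essentially no new ideas should be needed; the only work is to verify that the hypotheses of Theorem~\ref{theorem-1} are satisfied under the weaker assumption $|\mathbf{c}|=0$, and then to repackage the resulting formula.

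First I would set $g_i:=c_i g$ for $i=1,\ldots,r$. The hypothesis $|\mathbf{c}|=c_1+\cdots+c_r=0$ gives $\sum_{i=1}^r g_i = \bigl(\sum_{i=1}^r c_i\bigr)g = 0$, so the assumption of Theorem~\ref{theorem-1} holds. Next I would absorb the constants $c_i^{k_i}$ into the derivatives by observing that, since differentiation is linear and $c_i$ is a constant,
\begin{equation*}
c_i^{k_i}\bigl(f_i g^{k_i}\bigr)^{(s_i)} = \bigl(f_i (c_i g)^{k_i}\bigr)^{(s_i)} = \bigl(f_i g_i^{k_i}\bigr)^{(s_i)}.
\end{equation*}
Therefore the left-hand side of the corollary equals $\sum_{|\mathbf{k}|=n}\binom{n}{\mathbf{k}}\prod_{i=1}^r(f_i g_i^{k_i})^{(s_i)}$, which is precisely the left-hand side of Theorem~\ref{theorem-1}.

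Applying Theorem~\ref{theorem-1} then gives $0$ when $|\mathbf{s}|<n$, and in the case $|\mathbf{s}|=n$ it gives $n!\bigl(\prod_{i=1}^r f_i\bigr)\prod_{i=1}^r(g_i')^{s_i}$. To finish I would simply use $g_i'=c_i g'$ to get
\begin{equation*}
\prod_{i=1}^r (g_i')^{s_i} = \prod_{i=1}^r c_i^{s_i}(g')^{s_i} = \Bigl(\prod_{i=1}^r c_i^{s_i}\Bigr)(g')^{|\mathbf{s}|},
\end{equation*}
which matches the claimed right-hand side. There is no real obstacle here; the only minor thing to be careful about is the bookkeeping of the scalar factors $c_i^{k_i}$ on the left versus $c_i^{s_i}$ on the right, but the identity $k_1+\cdots+k_r=n=s_1+\cdots+s_r$ is not needed for the factorization since the $c_i^{k_i}$ already sit neatly inside the $k_i$-th power of $g_i$.
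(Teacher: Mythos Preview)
Your proposal is correct and is exactly the argument the paper intends: the corollary is stated as the particular case $g_i=c_i g$ of Theorem~\ref{theorem-1}, and your verification that $\sum_i g_i=0$ together with the bookkeeping $c_i^{k_i}(f_i g^{k_i})^{(s_i)}=(f_i g_i^{k_i})^{(s_i)}$ and $(g_i')^{s_i}=c_i^{s_i}(g')^{s_i}$ is all that is needed.
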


\begin{remark}
In the special case $r=2$, with $c_{1}=-1=-c_{2}$, we have 
\begin{equation*}
\sum_{k=0}^{n}\left( -1\right) ^{k}\binom{n}{k}\left( f_{1}g^{k}\right)
^{\left( s_{1}\right) }\left( f_{2}g^{n-k}\right) ^{\left( s_{2}\right)
}=\left\{ 
\begin{tabular}{lll}
$0$ &  & $\left( s_{1}+s_{2}<n\right) ,$ \\ 
&  &  \\ 
$\left( -1\right) ^{s_{1}}n!f_{1}f_{2}\left( g^{\prime }\right) ^{n}$ &  & $%
\left( s_{1}+s_{2}=n\right) .$%
\end{tabular}%
\right.
\end{equation*}
\end{remark}

We close with some direct consequences.

\begin{example}
\label{marker-example1}Put $f_{i}\left( x\right) =x^{\alpha _{i}}$, $%
g_{i}\left( x\right) =x^{\beta }$ $\left( i=1,\ldots ,r\right) $. Corollary~%
\ref{corollary2} yields, for $\mathbf{s}=\left( s_{1},\ldots ,s_{r}\right)
\in \left( \mathbb{Z}_{\geq 0}\right) ^{r}$ with $\left\vert s\right\vert =n$
and $\mathbf{c}=\left( c_{1},\ldots ,c_{r}\right) \in \mathbb{R}^{r}$ with $%
\left\vert \mathbf{c}\right\vert =0$, the identity 
\begin{equation}
\sum_{\left\vert \mathbf{k}\right\vert =n}\binom{n}{\mathbf{k}}%
\prod_{i=1}^{r}\left( c_{i}^{k_{i}}\binom{\alpha _{i}+k_{i}\beta }{s_{i}}%
\right) =\binom{n}{\mathbf{s}}\beta ^{n}\prod_{i=1}^{r}c_{i}^{s_{i}}.
\label{example1}
\end{equation}%
Identity $\left( \ref{example1}\right) $ can be found in Gould's collection
of binomial identities \cite[(6.41)]{Gould-Identities-1974}. In the special
case $r=2$ we obtain, for $s=0,\ldots ,n$, 
\begin{equation}
\sum_{k=0}^{n}\left( -1\right) ^{k}\binom{n}{k}\binom{\alpha _{1}+k\beta }{s}%
\binom{\alpha _{2}+\left( n-k\right) \beta }{n-s}=\left( -1\right) ^{s}%
\binom{n}{s}\beta ^{n}.  \label{example1-special}
\end{equation}%
Formula $\left( \ref{example1-special}\right) $ is an easy consequence of
the fact that $\sum_{k=0}^{n}\left( -1\right) ^{n-k}\binom{n}{k}k^{i}=0$ $%
\left( i=0,\ldots ,n-1\right) $, $\sum_{k=0}^{n}\left( -1\right) ^{n-k}%
\binom{n}{k}k^{n}=n!$ and $\binom{\alpha _{1}+k\beta }{s}\binom{\alpha
_{2}+\left( n-k\right) \beta }{n-s}$ is a polynomial in the variable $k$ of
degree $n$ with leading term $\left( -1\right) ^{n-s}\beta ^{n}k^{n}/\left(
s!\left( n-s\right) !\right) $.
\end{example}

\begin{example}
If $f_{i}\left( x\right) =e^{\alpha _{i}x}$, $g_{i}\left( x\right) =e^{\beta
x}$ $\left( i=1,\ldots ,r\right) $ we obtain: 
\begin{equation}
\sum_{\left\vert \mathbf{k}\right\vert =n}\binom{n}{\mathbf{k}}%
\prod_{i=1}^{r}\left( c_{i}^{k_{i}}\left( \alpha _{i}+k_{i}\beta \right)
^{s_{i}}\right) =\binom{n}{\mathbf{s}}\beta ^{n}\prod_{i=1}^{r}c_{i}^{s_{i}}.
\label{example2}
\end{equation}%
In the special case $r=2$ we obtain, for $s=0,\ldots ,n$, 
\begin{equation}
\sum_{k=0}^{n}\left( -1\right) ^{k}\binom{n}{k}\left( \alpha _{1}+k\beta
\right) ^{s}\left( \alpha _{2}+\left( n-k\right) \beta \right) ^{n-s}=\left(
-1\right) ^{s}n!\beta ^{n}.  \label{example2-special}
\end{equation}%
As Eq. $\left( \ref{example1-special}\right) $ in Ex. $\ref{marker-example1}$%
, formula $\left( \ref{example2-special}\right) $ is obvious because $\left(
\alpha _{1}+k\beta \right) ^{s}\left( \alpha _{2}+\left( n-k\right) \beta
\right) ^{n-s}$ is a polynomial in the variable $k$ of degree $n$ with
leading term $\left( -1\right) ^{n-s}\beta ^{n}k^{n}$.
\end{example}

\strut


\begin{thebibliography}{9}
\bibitem{Abel-amm-2013} U.~Abel, \newblock\emph{A generalization of the
Leibniz Rule}, \newblock Amer. Math. Monthly \textbf{120:10} (2013),
924--928. \newline
DOI 10.4169/amer.math.monthly.120.10.924

\bibitem{Baran-ea-2015} Miros\l aw Baran, Agnieszka Kowalska, Beata Mil\'{o}%
wka and Pawe\l\ Ozorka, \newblock Identities for a derivation operator and
their applications, \newblock Dolomites Research Notes on Approximation 
\textbf{8}\ (2015), 102--110. \newblock\newline
DOI 10.14658/pupj-drna-2015-Special\_Issue-10

\bibitem{Baran-ea-in preparation} M. Baran and P. Ozorka, \newblock %
Derivation operators and Markov's type property for $k$-th iterates, %
\newblock2015, in preparation.

\bibitem{Gould-Identities-1974} Henry W. Gould, \newblock Combinatorial
Identities, \newblock Morgantown Print \& Bind., Morgantown, WV (1972).

\bibitem{Milowka-2005} B. Mil\'{o}wka, \newblock Markov's inequality and a
generalized Pl\'{e}sniak condition, \newblock East Jour. of Approx. \textbf{%
11} (2005), 291--300.

\bibitem{Milowka-2006} B. Mil\'{o}wka, \newblock Markov's property for
derivatives of order $k$, \newblock PhD thesis (in Polish), 1--45, 2006.
\end{thebibliography}
\end{document}